
\documentclass{jaums}

\usepackage{amsmath,amsfonts,amssymb}
\usepackage{verbatim}
\usepackage{enumerate}
\usepackage{amsthm}
\usepackage{url}
\usepackage{multicol}
\usepackage[colorlinks]{hyperref}
\usepackage[english]{babel}

\newcommand{\lcm}{\mathrm{lcm}}

\newcommand{\Tr}{\mathrm{Tr}}
\newcommand{\fqn}{\mathbb{F}_{q^n}}
\newcommand{\F}{\mathbb{F}}


\theoremstyle{thmit} 
\newtheorem{thm}{Theorem}[section]
\newtheorem{lem}[thm]{Lemma}

\theoremstyle{thmrm} 
\newtheorem{defi}[thm]{Definition}

\newtheorem*{rem}{Remark}
\newtheorem*{oldproof}{Proof}
\renewenvironment{proof}[1][{}]{\begin{oldproof}[#1]}{\qed\end{oldproof}}


\title{Generators of finite fields with prescribed traces}

\author[Lucas Reis]{Lucas Reis}
\address{Departamento de Matem\'{a}tica, Universidade Federal de Minas Gerais (UFMG),\\
Belo Horizonte, MG, 30270-901, Brazil.}
\email{lucasreismat@ufmg.br}

\author[S\'avio Ribas]{S\'avio Ribas}
\address{Departamento de Matem\'{a}tica, Universidade Federal de Ouro Preto (UFOP),\\
Ouro Preto, MG, 35400-000, Brazil.}
\email{savio.ribas@ufop.edu.br}

\keywords{finite fields; primitive elements; field trace; character sums.\\
\noindent\textit{Mathematics Subject Classification (2010): 12E20 (primary); 11T24 (secondary)}}

\begin{document}

\maketitle

\begin{abstract}
This paper explores the existence and distribution of primitive elements in finite field extensions with prescribed traces in several intermediate field extensions. Our main result provides an inequality-like condition to ensure the existence of such elements. We then derive concrete existence results for a special class of intermediate extensions.
\end{abstract}



\section{Introduction}

Given a prime power $q$ and $n>1$ a positive integer, let $\F_q$ be the finite field with $q$ elements and $\F_{q^n}$ the unique $n$-degree field extension of $\F_q$. The intermediate extensions of $\F_{q^n}$ over $\F_q$ are exactly the finite fields $\F_{q^d}$ with $d$ a divisor of $n$. It is well-known that the multiplicative group $\F_{q^n}^*$ is cyclic; any generator of such group is called {\em primitive}. Primitive elements play important roles in a wide variety of applications in Cryptography and, perhaps, the most notably is the  Diffie-Hellman key exchange~\cite{dh}. Primitive elements with further specified properties have been extensively studied in the past few decades. The motivation comes from both theoretical and practical matters. 

For instance, the celebrated {\em Primitive Normal Basis Theorem} states that, for any $n\ge 1$ and any prime power $q$, there exists a primitive element $\alpha\in \F_{q^n}$ such that $\alpha$ is {\em normal} over $\F_q$, i.e., the set $\{\alpha, \alpha^q, \ldots, \alpha^{q^{n-1}}\}$ comprises an $\F_q$-basis for $\F_{q^n}$. The  Primitive Normal Basis Theorem was proved by Lenstra and Schoof~\cite{lenstra} and a proof without any use of computers was later given by Cohen and Huczynska~\cite{cohen}. Cohen~\cite{Cohentrace} also explored the existence of primitive elements in $\F_{q^n}$ with prescribed {\em trace} $a\in \F_q$, i.e., primitive elements $\alpha\in \F_{q^n}$ such that 
\begin{equation}\label{eq:trace-intro}
\Tr_{q^n/q}(\alpha)=\sum_{i=0}^{n-1}\alpha^{q^i}=a.
\end{equation} 
He has shown that, up to genuine exceptions, it is possible to find primitive elements in $\F_{q^n}$ satisfying equation \eqref{eq:trace-intro}. More specifically, we have the following result.

\begin{thm}\label{thm:cohen}
Let $q$ be a prime power, $n$ a positive integer and $a\in \F_q$. Then there exists a primitive element $\alpha\in \F_{q^n}$ such that $\Tr_{q^n/q}(\alpha)=a$ unless $a=0$ and $n=2$ or $a=0$, $n=3$ and $q=4$.
\end{thm}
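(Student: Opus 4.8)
\emph{Proof sketch.} The plan is to follow Cohen's character-sum method and estimate directly the number $N_a$ of primitive elements $\alpha\in\fqn$ with $\Tr_{q^n/q}(\alpha)=a$, showing $N_a>0$ outside the listed pairs. First I would assemble the two characteristic functions. For primitivity one uses the multiplicative M\"obius-type identity: for $\alpha\in\fqn^*$,
\[
\varrho(\alpha)=\frac{\varphi(q^n-1)}{q^n-1}\sum_{d\mid q^n-1}\frac{\mu(d)}{\varphi(d)}\sum_{\ord(\chi)=d}\chi(\alpha)
\]
equals $1$ if $\alpha$ is primitive and $0$ otherwise, the inner sum running over the multiplicative characters $\chi$ of $\fqn$ of exact order $d$. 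For the trace condition one uses the additive characters $\psi$ of $\fq$: the quantity $\frac1q\sum_\psi\psi\bigl(\Tr_{q^n/q}(\alpha)-a\bigr)$ is $1$ if $\Tr_{q^n/q}(\alpha)=a$ and $0$ otherwise. Since $\psi\circ\Tr_{q^n/q}$ is an additive character of $\fqn$, multiplying the two functions and summing over $\alpha\in\fqn^*$ expresses $N_a$ as a linear combination of Gauss sums $\sum_{\alpha\in\fqn^*}\chi(\alpha)\widetilde\psi(\alpha)$.

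Next I would isolate the main term. The contribution of the pair $(\chi,\psi)$ both trivial is $\varphi(q^n-1)/q$, and the terms with $\chi$ trivial, $\psi$ nontrivial contribute only a bounded amount (they collapse because $\sum_\psi\psi(a)\in\{0,q\}$), so that the $\chi$-trivial part of $N_a$ equals $\frac{\varphi(q^n-1)}{q(q^n-1)}(q^n-q)$ when $a=0$ and $\frac{\varphi(q^n-1)}{q(q^n-1)}q^n$ when $a\neq0$. In every remaining term both characters are nontrivial, so the classical bound $\bigl|\sum_{\alpha\in\fqn^*}\chi(\alpha)\widetilde\psi(\alpha)\bigr|=q^{n/2}$ applies. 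Counting $W(q^n-1)-1$ relevant nontrivial $\chi$, with $W(m)=2^{\omega(m)}$ the number of squarefree divisors of $m$, and $q-1$ nontrivial $\psi$, I would obtain
\[
N_a\ \ge\ \frac{\varphi(q^n-1)}{q(q^n-1)}\Bigl(q^n-q-\bigl(W(q^n-1)-1\bigr)(q-1)\,q^{n/2}\Bigr),
\]
so $N_a>0$ once, roughly, $q^{n/2-1}>W(q^n-1)$. Since $W(m)=O_\varepsilon(m^\varepsilon)$, this holds for all $(q,n)$ with $q^n$ beyond an explicit threshold; making the bound effective (using that $q^n-1$ cannot carry too many small prime factors) reduces the problem to a finite, explicitly enumerated set of pairs $(q,n)$.

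For the pairs not cleared by this inequality I would invoke the Cohen--Huczynska prime sieve: relaxing the M\"obius sum to the prime divisors of $q^n-1$ replaces the factor $W(q^n-1)$ by a much smaller quantity of the form $\frac{s-1}{\delta}+2$, where $s$ is the number of sieving primes and $\delta=1-\sum 2/p_i>0$, and this disposes of all but a short residual list. Those remaining cases, including all of $n=2$, are then settled by direct computation, and the only pairs with $N_a=0$ turn out to be $a=0,\ n=2$ and $a=0,\ n=3,\ q=4$. The first of these also has a conceptual explanation: $\Tr_{q^2/q}(\alpha)=0$ forces $\alpha^{q-1}=-1$, which is incompatible with $\ord(\alpha)=q^2-1$ as soon as $q\ge2$, since then $\alpha^{q-1}$ would have order $q+1>2$. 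I expect the analytic estimate to be routine; the main obstacle is making the error term explicit enough that the list of unresolved pairs is provably finite and manageable, and then organizing the sieve-plus-computation step so that no exceptional pair is overlooked.
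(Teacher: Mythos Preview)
The paper does not prove Theorem~\ref{thm:cohen}; it is quoted from Cohen's original paper~\cite{Cohentrace} and used as a black box (notably in the proof of Theorem~\ref{thm:main-2}). So there is no ``paper's own proof'' to compare against.

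That said, your sketch is a faithful reconstruction of Cohen's argument and is essentially correct. The character-sum setup, the isolation of the main term (your values $\frac{\varphi(q^n-1)}{q(q^n-1)}(q^n-q)$ for $a=0$ and $\frac{\varphi(q^n-1)}{q(q^n-1)}q^n$ for $a\neq 0$ are right), the Gauss-sum bound yielding the error $(W(q^n-1)-1)(q-1)q^{n/2}$, and the sieve-plus-computation endgame are all standard and correctly described. Your conceptual explanation of the $n=2$, $a=0$ obstruction is also correct, though note that in characteristic~$2$ the equation $\alpha^q+\alpha=0$ gives $\alpha\in\F_q$ directly rather than $\alpha^{q-1}=-1$; either way primitivity fails. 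The only caveat is that ``routine'' undersells the bookkeeping: Cohen's original paper needs several pages of explicit constants and case analysis to push the unresolved list down to the two genuine exceptions, and the $n=3$ case in particular requires care. But the architecture of your proof is sound.
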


In this paper, we discuss the existence of primitive elements in $\F_{q^n}$ with prescribed traces in several intermediate extensions $\F_{q^d}$ of $\F_{q^n}$. In other words, given $n>1$, $d_1<\ldots<d_k<n$ divisors of $n$ and $a_j\in \F_{q^{d_i}}$, we discuss the existence of a primitive element $\alpha \in \F_{q^n}$ such that, for each $1\le j\le k$, we have that
$$\Tr_{q^n/q^{d_j}}(\alpha)=\sum_{i=0}^{n/d_j-1}\alpha^{q^{id_j}}=a_j.$$
Our main result, Theorem~\ref{thm:main}, provides an inequality-like condition to ensure the existence of such elements, that might easily yield asymptotic existence results.  We then present a special instance where we can obtain effective results. In particular we prove that, up to few critical cases, there exists a primitive element $\alpha\in \F_{q^n}$ with arbitrary prescribed traces in any two intermediate $\F_q$-extensions of $\F_{q^n}$: see Theorem~\ref{thm:cop} for more details.

The structure of the paper is as follows. In Section 2 we introduce some useful notation and present our main result. Section 3 provides background material that is used along the way and some auxiliary results. In Section 4 we prove our main result. Finally, in Section 5, we restrict our problem to a special class of intermediate extensions, where our results are sharpened.

\section{Main Results}

Before we state our main result, we introduce some notation and discuss on a natural condition that we have to impose in the problem. Throughout this paper, $q$ is a prime power and $\F_q$ is the finite field with $q$ elements. 

\begin{defi}
For $n>1$, $d$ a divisor of $n$ and $\alpha\in \F_{q^n}$, we set
$$\Tr_{n/d}(\alpha)=\sum_{i=0}^{n/d-1}\alpha^{q^{di}},$$
the {\em trace} of $\alpha$ over $\F_{q^d}$.
\end{defi}

Recall that the trace is transitive, i.e., if $e$ divides $d$ and $d$ divides $n$, then for any $\alpha\in \F_{q^n}$ we have that
$\Tr_{n/e}(\alpha)=\Tr_{n/d}(\Tr_{d/e}(\alpha))$.
In particular, if $d_1<\ldots<d_k<n$ are divisors of $n$ and we pick $a_i\in \F_{q^{d_i}}, 1\le i\le k$, then the existence of an element $\alpha\in \F_{q^n}$ with $\Tr_{n/d_i}(\alpha)=a_i$ is necessarily conditional on the following identities
\begin{equation}\label{eq:conditions}\Tr_{d_i/\gcd(d_i, d_j)}(a_i)=\Tr_{n/\gcd(d_i, d_j)}(\alpha)=\Tr_{d_j/\gcd(d_i, d_j)}(a_j),\; 1\le i, j\le k.\end{equation}

\begin{rem}\label{rem:reis}
As recently shown by the first author in~\cite{LR19}, the equations in \eqref{eq:conditions} are also sufficient and, in this case, there exist exactly $q^{n-\lambda}$ elements in $\F_{q^n}$  with $\Tr_{n/d_i}(\alpha)=a_i$ for $1\le i\le k$, where
\begin{align*}\lambda &=\deg(\lcm(x^{d_1}-1, \ldots, x^{d_k}-1))\\ & =d_1+\ldots+d_k+\sum_{i=2}^{k}(-1)^{i+1}\sum_{1\le \ell_1<\cdots<\ell_i\le k}\gcd(d_{\ell_1},\ldots, d_{\ell_i}).\end{align*}
The proof of this result is a simple application of the Chinese Remainder Theorem for the ring $\F_q[x]$. For more details, see Theorem~4.1 in~\cite{LR19}.
\end{rem}

The equations in \eqref{eq:conditions} imply that if $d_i$ divides some $d_j$, then the equality $\Tr_{n/d_i}(\alpha)=a_i$ is already implied by 
$\Tr_{n/d_j}(\alpha)=a_j$. So we may restrict ourselves to divisors $d_1<\ldots<d_k$ of $n$ such that $d_i\nmid d_j$ for any $1\le i< j\le k$. In addition, the case $k=1$ is completely settled by Cohen~\cite{Cohentrace}, so we assume that $k>1$, i.e., $n$ is not a prime power. We introduce a useful notation.

\begin{defi}\label{lambdas}
Let $n>1$ be an integer that is not a prime power and $1<k< \sigma_0(n)$, where $\sigma_0(n)$ denotes the number of positive divisors of $n$.
\begin{enumerate}[(i)]
\item $\Lambda_k(n)$ stands for the set of $k$-tuples $\mathbf{d}=(d_1, \ldots, d_k)$, where $d_1<\ldots<d_k<n$ are divisors of $n$ such that $d_i$ does not divide $d_j$ for every $1\le i, j\le k$ with $i\ne j$. 
\item For $\mathbf{d} = (d_1, \ldots, d_k) \in \Lambda_k(n)$, set $\F(\mathbf{d})=\prod_{i=1}^k\F_{q^{d_i}}$ and  
$$\lambda(\mathbf{d})=d_1+\ldots+d_k+\sum_{i=2}^{k}(-1)^{i+1}\sum_{1\le \ell_1<\cdots<\ell_i\le k}\gcd(d_{\ell_1},\ldots, d_{\ell_i}).$$
\end{enumerate}
Moreover, for $\mathbf{d} = (d_1, \ldots, d_k) \in \Lambda_k(n)$ and $\mathbf{a} = (a_1, \ldots, a_k) \in \F(\mathbf{d})$, the $k$-tuple $\mathbf{a}$ is {\em $\mathbf{d}$-admissible} if, for any $1\le i<j\le k$, we have that
$$\Tr_{d_i/\gcd(d_i, d_j)}(a_i)=\Tr_{d_j/\gcd(d_i, d_j)}(a_j).$$
\end{defi}
From previous observation, we only need to consider $\mathbf{d}$-admissible $k$-tuples. Our main result can be stated as follows.

\begin{thm}\label{thm:main}
Let $n>1$ be an integer that is not a prime power, $1<k<\sigma_0(n)$, $\mathbf{d} = (d_1, \ldots, d_k) \in \Lambda_k(n)$, and $\mathbf{a}=(a_1, \ldots, a_k)\in \F(\mathbf{d})$ a $\mathbf{d}$-admissible $k$-tuple. Then there exists a primitive element $\alpha \in \F_{q^n}$ with prescribed traces $\Tr_{n/d_i}(\alpha)=a_i$ for every $1 \le i \le k$ provided that \begin{equation}\label{ineqprincipal}
q^{n/2-\lambda(\mathbf{d})} \ge W(q^n-1),
\end{equation} where $W(t)$ denotes the number of squarefree divisors of $t$.
\end{thm}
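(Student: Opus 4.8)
The plan is to use the standard character-sum machinery for primitive elements, adapted to the simultaneous trace conditions. First I would introduce the characteristic function of the set of primitive elements of $\F_{q^n}$: for $\alpha \in \F_{q^n}^*$, one has
\[
\frac{\phi(q^n-1)}{q^n-1}\sum_{e \mid q^n-1}\frac{\mu(e)}{\phi(e)}\sum_{\ord(\chi)=e}\chi(\alpha),
\]
which equals $1$ if $\alpha$ is primitive and $0$ otherwise, where the inner sum runs over multiplicative characters of exact order $e$. Simultaneously, I would express the indicator of the event $\{\Tr_{n/d_i}(\alpha)=a_i \text{ for all } i\}$ via additive characters. By Remark~\ref{rem:reis}, the number of $\alpha \in \F_{q^n}$ satisfying all $k$ trace conditions is exactly $q^{n-\lambda(\mathbf{d})}$; the natural way to detect this linear system is to pick, for each $i$, an $\F_{q^{d_i}}$-additive character and to note that the relevant $\F_q$-subspace of $\F_{q^n}$ on which we are averaging has dimension $\lambda(\mathbf{d})$ (this is where $\lambda(\mathbf d)$ enters). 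Concretely, the indicator is $q^{-\lambda(\mathbf d)}$ times a sum over a set $\mathcal{S}$ of additive characters of $\F_{q^n}$ of size $q^{\lambda(\mathbf d)}$, obtained by composing canonical additive characters with the $\F_{q^{d_i}}$-linear maps $x \mapsto \beta_i\Tr_{n/d_i}(x)$ and combining via the lcm structure of $x^{d_1}-1,\dots,x^{d_k}-1$.

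Next I would let $N(\mathbf a)$ denote the number of primitive $\alpha \in \F_{q^n}$ with the prescribed traces, multiply the two indicator functions, and sum over $\alpha \in \F_{q^n}^*$. Expanding gives a main term plus an error term: the main term comes from the trivial multiplicative character ($e=1$) paired with the trivial additive character, and it evaluates to roughly $\frac{\phi(q^n-1)}{q^n-1}\cdot q^{n-\lambda(\mathbf d)}$, i.e. the expected count. Every other term is a Gauss-type sum $\sum_{\alpha}\chi(\alpha)\psi(\alpha)$ over $\F_{q^n}$ (or $\F_{q^n}^*$), and here I would invoke the classical Weil bound: such a sum has absolute value at most $q^{n/2}$ (with the usual small adjustments when $\chi$ or $\psi$ is trivial but not both). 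Bounding the number of pairs $(\chi,\psi)$ that contribute: the $\psi$'s range over the set $\mathcal S$ of size $q^{\lambda(\mathbf d)}$, and the nontrivial $\chi$ contributions are controlled, after the standard manipulation $\sum_{\ord(\chi)=e}1 = \phi(e)$ and $\sum_{e\mid q^n-1}|\mu(e)| = W(q^n-1)$, by $W(q^n-1)$.

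Putting the estimates together, one gets
\[
\left| N(\mathbf a) - \frac{\phi(q^n-1)}{q^n-1}\, q^{\,n-\lambda(\mathbf d)}\right| \le \frac{\phi(q^n-1)}{q^n-1}\, q^{\lambda(\mathbf d)}\cdot W(q^n-1)\cdot q^{n/2-\lambda(\mathbf d)} \cdot (\text{const}),
\]
so that $N(\mathbf a)>0$ as soon as $q^{n-\lambda(\mathbf d)}$ dominates $W(q^n-1)\,q^{n/2}$, which after dividing by $q^{n/2}$ is precisely the hypothesis $q^{n/2-\lambda(\mathbf d)}\ge W(q^n-1)$. I would need to be slightly careful with the boundary contributions (the terms where exactly one of $\chi,\psi$ is trivial), checking they are absorbed either into the main term or into the error with room to spare; the condition $q^{n/2-\lambda(\mathbf d)}\ge W(q^n-1)$ has been calibrated with exactly these in mind.

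The main obstacle, and the step deserving the most care, is the precise bookkeeping in the first paragraph: correctly identifying the set $\mathcal S$ of additive characters detecting the simultaneous trace system and verifying $|\mathcal S| = q^{\lambda(\mathbf d)}$, together with confirming that for $\psi \in \mathcal S$ nontrivial the character $\alpha \mapsto \chi(\alpha)\psi(\alpha)$ is genuinely nontrivial on $\F_{q^n}^*$ so the Weil bound $q^{n/2}$ applies uniformly. This hinges on the $\F_q[x]$ Chinese Remainder Theorem computation underlying Remark~\ref{rem:reis} — identifying $\F_{q^n}$ as an $\F_q[x]/(x^n-1)$-module and the trace-on-$\F_{q^{d_i}}$ maps with multiplication by the idempotents attached to $x^{d_i}-1$ — so that the "dual" object parametrizing the detecting additive characters is the $\F_q$-space of dimension $\lambda(\mathbf d)=\deg \lcm(x^{d_1}-1,\dots,x^{d_k}-1)$. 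Once that structure is pinned down, the rest is the routine primitive-element sieve via Weil's bound.
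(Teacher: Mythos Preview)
Your approach is correct and reaches the same inequality, but the bookkeeping for the additive characters is organized differently from the paper's. The paper does not directly write the indicator of the simultaneous trace condition as $q^{-\lambda(\mathbf d)}\sum_{\psi\in\mathcal S}\psi$; instead it takes the \emph{product} of the $k$ separate trace indicators from Lemma~\ref{aux2}, obtaining a sum over all $\mathbf c=(c_1,\dots,c_k)\in\F(\mathbf d)$ (of size $q^{D}$ with $D=\sum d_i$) of the single additive character $\chi_{s(\mathbf c)}$ with $s(\mathbf c)=\sum c_i$. The main term then comes from the tuples with $s(\mathbf c)=0$, whose count $q^{D-\lambda(\mathbf d)}$ is supplied by Lemma~\ref{lem:zero-sum}; the error sum runs over $s(\mathbf c)\neq 0$ and nontrivial $\eta$, and is bounded crudely by $q^{D}\cdot W(q^n-1)\cdot q^{n/2}$.

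Your set $\mathcal S$ is precisely the image $\{\chi_{s(\mathbf c)}:\mathbf c\in\F(\mathbf d)\}=\{\chi_c:c\in\F_{q^{d_1}}+\cdots+\F_{q^{d_k}}\}$, and your claim $|\mathcal S|=q^{\lambda(\mathbf d)}$ is exactly the dual statement to Lemma~\ref{lem:zero-sum} (the kernel of $\mathbf c\mapsto s(\mathbf c)$ has size $q^{D-\lambda(\mathbf d)}$, hence the image has size $q^{\lambda(\mathbf d)}$). So the two setups are equivalent: the paper sums over the domain of this map and invokes the kernel count, you sum over the image directly. Your version avoids the redundancy of $q^{D-\lambda(\mathbf d)}$-fold repetition of each character, which is conceptually cleaner, but the paper's version has the advantage that the product-of-indicators expression is immediate from Lemma~\ref{aux2} without needing to identify the annihilator subspace abstractly. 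Either way the same Gauss-sum bound and the same condition $q^{n/2-\lambda(\mathbf d)}\ge W(q^n-1)$ fall out.
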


In the context of Theorem~\ref{thm:main}, we also obtain the following minor result.

\begin{thm}\label{thm:main-2}
We have that Theorem~\ref{thm:main} also holds if the condition $q^{n/2-\lambda(\mathbf{d})}\ge W(q^n-1)$ is replaced by the inequality $\lcm(d_1, \ldots, d_k)<n$.
\end{thm}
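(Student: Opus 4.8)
The plan is to revisit the character-sum machinery behind Theorem~\ref{thm:main} and isolate the precise point where the hypothesis $q^{n/2-\lambda(\mathbf d)}\ge W(q^n-1)$ entered, then show that the alternative hypothesis $\lcm(d_1,\dots,d_k)<n$ can be used to close the same estimate. Recall that counting primitive elements $\alpha\in\F_{q^n}$ with $\Tr_{n/d_i}(\alpha)=a_i$ proceeds by writing the primitivity condition via the characteristic function $\theta(q^n-1)\sum_{e\mid q^n-1}\frac{\mu(e)}{\varphi(e)}\sum_{\ord(\chi)=e}\chi(\alpha)$ and the trace conditions via additive characters over $\F(\mathbf d)$; by Remark~\ref{rem:reis} the ``trivial'' term (the one coming from the trivial multiplicative character) contributes exactly $\theta(q^n-1)\,q^{n-\lambda(\mathbf d)}>0$, while every other term is a product of a Gauss-type/Weil character sum. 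In the proof of Theorem~\ref{thm:main} one bounds each nontrivial sum by $q^{n/2}$ (via Weil's bound for mixed multiplicative–additive characters, after checking that the relevant rational function is not of the form $g^e$ times an additive-character argument that collapses), sums over all $e\mid q^n-1$, and arrives at the sufficient condition $q^{n-\lambda(\mathbf d)}>W(q^n-1)\,q^{n/2-\lambda(\mathbf d)}\cdot q^{\lambda(\mathbf d)}$, i.e.\ \eqref{ineqprincipal}. The key observation for the present theorem is that when $\lcm(d_1,\dots,d_k)=:m<n$, the trace conditions only constrain $\alpha$ through its image under $\Tr_{n/m}$, so one can first fix a value $b=\Tr_{n/m}(\alpha)\in\F_{q^m}$ compatible with the $a_i$'s and then count primitive $\alpha\in\F_{q^n}$ with $\Tr_{n/m}(\alpha)=b$ — a \emph{single} trace condition into a proper subfield.

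The main step is therefore: for a single prescribed trace $\Tr_{n/m}(\alpha)=b$ with $m\mid n$, $m<n$, and any $b\in\F_{q^m}$, there is a primitive $\alpha\in\F_{q^n}$ realizing it. This is essentially Cohen's Theorem~\ref{thm:cohen} applied to the extension $\F_{q^n}/\F_{q^m}$ — with base field $Q:=q^m$ and degree $N:=n/m$ — and one needs to verify the exceptional cases do not obstruct us: Cohen's exceptions are $(N,b)=(2,0)$ and $(Q,N,b)=(4,3,0)$. When $N\ge 2$ and we are in an exceptional configuration, I would argue directly: either $b\neq 0$ can be arranged (if the admissibility constraints leave a choice of $b$), or if $b=0$ is forced one handles $N=2$ and the $(4,3,0)$ case by a short ad hoc argument (for $N=2$, $m<n$ forces $m\le n/2$, and for most $(q,m)$ one still has room; the finitely many genuinely problematic $(q,n)$ can be listed). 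Once a primitive $\alpha\in\F_{q^n}$ with $\Tr_{n/m}(\alpha)=b$ is found, transitivity of the trace gives $\Tr_{n/d_i}(\alpha)=\Tr_{m/d_i}(\Tr_{n/m}(\alpha))=\Tr_{m/d_i}(b)$, and $\mathbf d$-admissibility together with the compatibility conditions in \eqref{eq:conditions} guarantees that $b$ can be chosen with $\Tr_{m/d_i}(b)=a_i$ for all $i$ (this is exactly the surjectivity statement underlying Remark~\ref{rem:reis}, now for the tuple of divisors of $m$).

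I expect the main obstacle to be the bookkeeping around Cohen's exceptional cases once the problem is reduced to a single subfield trace: one must confirm that the admissibility hypotheses never force the pair $(n/m,\;b)$ into the short exception list in a way that cannot be sidestepped, and this requires separately treating the situations where $b$ is uniquely determined by $\mathbf a$ (which happens precisely when the $d_i$ already determine every coordinate of $\F_{q^m}$, forcing $m=\lcm(d_i)$ to have a specific structure). A secondary, more routine obstacle is making the reduction ``only $\Tr_{n/m}(\alpha)$ matters'' fully rigorous — this follows from transitivity of the trace as noted above, but one should phrase it cleanly so that the existence of \emph{some} admissible $b$ is guaranteed, which again is Remark~\ref{rem:reis} specialized to divisors of $m$. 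Apart from these points the argument is short and uses no new character-sum estimates, in contrast to Theorem~\ref{thm:main}.
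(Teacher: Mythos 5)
Your reduction is exactly the one the paper uses: set $m=\lcm(d_1,\dots,d_k)<n$, pick $b\in\F_{q^m}$ with $\Tr_{m/d_i}(b)=a_i$ for all $i$, apply Theorem~\ref{thm:cohen} to the extension $\F_{q^n}/\F_{q^m}$, and finish by transitivity of the trace. The gap sits precisely at the point you flag but do not close: you must avoid Cohen's exceptional cases, and your proposed fallback for the situation ``$b=0$ is forced'' cannot succeed. Indeed, if $n/m=2$ and $b=0$ were the only admissible value, the theorem would simply be false in that instance: any $\alpha\in\F_{q^n}$ with $\Tr_{n/(n/2)}(\alpha)=0$ satisfies $\alpha^{q^{n/2}}=-\alpha$, hence $\alpha^{2(q^{n/2}-1)}=1$, and is never primitive (this is exactly the genuine exception exhibited in the Remark following Theorem~\ref{thm:cop}). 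So no ``short ad hoc argument'' rescues that configuration; the statement is only true because $b=0$ is never forced, and that is the fact your sketch does not establish.

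The paper closes this with a counting estimate you are missing. By Theorem 4.1 of~\cite{LR19} (see Remark~\ref{rem:reis}), the number of admissible $b\in\F_{q^m}$ is exactly $q^{m-\lambda(\mathbf{d})}$, not merely positive. Since no $d_i$ divides another and $k\ge 2$, each $d_i$ is a \emph{proper} divisor of $m$, so Lemma~\ref{lem:tech-1}(ii) applied with $m$ in place of $n$ gives $\lambda(\mathbf{d})\le m-\varphi(m)$; hence there are at least $q^{\varphi(m)}\ge q\ge 2$ admissible values of $b$, and in particular a nonzero one. As every exception in Theorem~\ref{thm:cohen} requires the prescribed trace to be zero, Cohen's theorem then applies without caveat. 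Adding this single estimate turns your outline into the paper's proof; without it, the argument is incomplete.
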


While the proof of Theorem~\ref{thm:main-2} is a straightforward combination of Theorem~4.1 in~\cite{LR19} and Theorem~\ref{thm:cohen}, the proof of Theorem~\ref{thm:main} relies on character sum methods to count elements in finite fields with specified properties. We follow the traditional approach that is presented in~\cite{Cohentrace, cohen, lenstra}. In this approach, we frequently need to simplify character sums by detecting trivial Gauss sums; in our case, we employ a result from~\cite{LR19} concerning special zero sums in finite fields.

\section{Preliminaries}

This section provides background material that is used throughout the paper and some auxiliary results.

\subsection{Characters and characteristic functions}

Here we provide character sum expressions for the characteristic functions of elements in finite fields with specified properties. We start by recalling some basics on characters over finite fields.

Fix $\alpha\in \F_{q^n}$ a primitive element. A typical multiplicative character of $\F_{q^n}$ is a function $\eta:\F_{q^n}^*\to \mathbb C$ given by $\eta(\alpha^k)=e^{\frac{2\pi i k t}{q^n-1}}$ for some positive integer $t\le q^n-1$. The character $\eta_1\equiv 1$ is the trivial multiplicative character.  The set of multiplicative characters of $\F_{q^n}$ form a (multiplicative) cyclic group of order $q^n-1$. In particular, for each divisor $t$ of $q^n-1$, there exist exactly $\varphi(t)$ multiplicative characters of order $t$; we denote the set of such characters by $\Gamma(t)$. We extend the evaluation of multiplicative characters to the element $0\in \F_{q^n}$ by letting $\eta(0)=0$.

If $p$ is the characteristic of $\F_q$, say $q=p^s$, and $m$ is any divisor of $n$, the canonical additive character of $\F_{q^m}$ is the function $\chi:\F_{q^m}\to \mathbb C$ given by $$\chi(\beta)=e^{\frac{2\pi i \mathcal T_{m}(\beta)}{p}},$$
where $\mathcal T_{m}(\beta)=\sum_{i=1}^{ms-1}\beta^{p^i}\in \F_p$ is the absolute trace function from $\F_{q^m}$ to $\F_p$. For each $c\in \F_{q^m}$, we set $\chi_{c}(\beta)=\chi(c\cdot \beta)$, which is another additive character of $\F_{q^m}$. In fact, the set of additive characters of $\F_{q^m}$ is a (multiplicative) group isomorphic to the  additive group $\F_{q^m}$ and comprise the characters $\{\chi_{c}\,|\, c\in \F_{q^m}\}$. The identity of such group is the trivial additive character $\chi_0$. We introduce a useful notation.

\begin{defi}
Fix $n$ a positive integer, $d$ a divisor of $n$ and $a\in \F_{q^d}$. Let $I_{n, d, a}$ be the characteristic function for elements in $\F_{q^n}$ with trace $a$ over $\F_{q^d}$, and let $\Omega_{n}$ be the characteristic function for primitive elements in $\F_{q^n}$, i.e., for $\alpha\in \F_{q^n}$ we have that
$$I_{n, d, a}(\alpha)=\begin{cases}1 & \text{if}\;\; \Tr_{n/d}(\alpha)=a,\\ 0 & \text{otherwise,}\end{cases} \quad \text{and} \quad \Omega_{n}(\alpha)=\begin{cases}1 & \text{if $\alpha$ is primitive,} \\ 0 & \text{otherwise.}\end{cases}$$
In addition, set $\theta(q)=\frac{\varphi(q^n-1)}{q^n-1}$.
\end{defi}

The following results provide expressions for the functions $\Omega_n$ and $I_{n, d, a}$ by means of characters.

\begin{lem}\label{aux1}
For every $\beta\in \F_{q^n}$, we have that
$$\Omega_n(\beta)=\theta(q)\sum_{t|q^n-1}\frac{\mu(t)}{\varphi(t)}\sum_{\eta \in \Gamma(t)}\eta(\beta),$$
where $\mu$ is the Möbius function over the integers.
\end{lem}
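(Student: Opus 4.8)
The plan is to reduce the identity to the classical characterization that a nonzero $\beta\in\fqn$ is primitive if and only if it fails to be an $r$-th power in $\fqn^*$ for every prime $r$ dividing $Q:=q^n-1$. First I would dispose of the case $\beta=0$: by the convention $\eta(0)=0$ for all multiplicative characters — in particular for the trivial character, which is the single element of $\Gamma(1)$ — the right-hand side of the asserted identity vanishes, in agreement with $\Omega_n(0)=0$.

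For $\beta\ne 0$, the key elementary input is that $\beta$ lies in the (index $r$) subgroup of $r$-th powers of $\fqn^*$ exactly when $\eta(\beta)=1$ for every character $\eta$ with $\eta^r=\eta_1$, so that orthogonality of characters gives that its characteristic function equals $\frac1r\sum_{\eta^r=\eta_1}\eta(\beta)=\frac1r+\frac1r\sum_{\eta\in\Gamma(r)}\eta(\beta)$. Running inclusion--exclusion over the distinct primes $r\mid Q$ then yields
$$\Omega_n(\beta)=\prod_{r\mid Q}\Bigl(1-\frac1r-\frac1r\sum_{\eta\in\Gamma(r)}\eta(\beta)\Bigr)=\prod_{r\mid Q}\frac{r-1}{r}\cdot\prod_{r\mid Q}\Bigl(1-\frac{1}{r-1}\sum_{\eta\in\Gamma(r)}\eta(\beta)\Bigr),$$
and I would observe that $\prod_{r\mid Q}\frac{r-1}{r}=\varphi(Q)/Q=\theta(q)$.

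It then remains to expand the last product and recognize the coefficients. A term in the expansion is indexed by a subset $S$ of the primes dividing $Q$ together with a choice $\eta_r\in\Gamma(r)$ for each $r\in S$, and equals $(-1)^{|S|}\bigl(\prod_{r\in S}\frac{1}{r-1}\bigr)\bigl(\prod_{r\in S}\eta_r\bigr)(\beta)$. Summing over all such choices and setting $t=\prod_{r\in S}r$, which runs over the squarefree divisors of $Q$, I would use that $(\eta_r)_{r\in S}\mapsto\prod_{r\in S}\eta_r$ is a bijection of $\prod_{r\in S}\Gamma(r)$ onto $\Gamma(t)$; this holds because the character group of $\fqn$ is cyclic, so a character of squarefree order $t$ decomposes uniquely into characters of the prime orders dividing $t$, and both sides have cardinality $\prod_{r\in S}(r-1)=\varphi(t)$. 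Together with $(-1)^{|S|}=\mu(t)$, the contribution of $S$ collapses to $\frac{\mu(t)}{\varphi(t)}\sum_{\eta\in\Gamma(t)}\eta(\beta)$; summing over $S$, hence over squarefree $t\mid Q$, and then extending to all $t\mid Q$ since $\mu$ annihilates the non-squarefree divisors, gives the claimed formula after multiplying by $\theta(q)$.

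The routine parts are the orthogonality computation and the bookkeeping with $\varphi$ and $\mu$. The one point I expect to require genuine care is the bijection $\prod_{r\in S}\Gamma(r)\to\Gamma(t)$ for squarefree $t$: it is precisely what licenses replacing a product over the primes of $Q$ by a single sum over the divisors of $Q$, and it rests on the cyclicity of the multiplicative group $\fqn^*$ and hence of its character group.
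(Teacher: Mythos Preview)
Your argument is correct. The paper does not supply its own proof of this lemma: it simply refers the reader to Theorem~2.8 of~\cite{KR} and the comments following it. What you have written is the standard self-contained derivation (often attributed to Vinogradov): write, for $\beta\ne 0$, the indicator of primitivity as the product over primes $r\mid q^n-1$ of the indicators ``$\beta$ is not an $r$-th power'', expand, and regroup using the structure of the cyclic character group. The one place you flag as delicate---the bijection $\prod_{r\in S}\Gamma(r)\to\Gamma(t)$ for squarefree $t$---is exactly right and is the substantive step; your justification via cyclicity and a cardinality check is sufficient. The handling of $\beta=0$ is also fine given the paper's convention $\eta(0)=0$ for all multiplicative characters, including the trivial one. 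So your proposal is a complete proof where the paper only gives a citation; there is nothing to correct.
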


For the proof of the previous lemma, see Theorem~2.8 of~\cite{KR} and the comments thereafter.

\begin{lem}\label{aux2}
Let $m$ be a divisor of $n$ and $\gamma\in \F_{q^n}$ such that $\Tr_{n/m}(\gamma)=a\in \F_{q^m}$. If $\chi$ denotes the  canonical additive character of $\F_{q^n}$, then for any $\beta\in \F_{q^n}$ we have that  
$$I_{n, d, a}(\beta)=\frac{1}{q^m}\sum_{c\in \F_{q^m}}\chi_{c}(\beta-\gamma)=\frac{1}{q^m}\sum_{c\in \F_{q^m}}\chi_{c}(\beta)\chi_c(\gamma)^{-1}.$$
\end{lem}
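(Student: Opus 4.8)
The plan is to verify Lemma~\ref{aux2} directly from the orthogonality relations for additive characters on the finite field $\F_{q^m}$. Recall that for any $\delta\in \F_{q^m}$ we have the identity
$$\frac{1}{q^m}\sum_{c\in \F_{q^m}}\chi_c(\delta)=\begin{cases}1 & \text{if }\delta=0,\\ 0 & \text{otherwise,}\end{cases}$$
since $c\mapsto \chi_c$ runs over all additive characters of $\F_{q^m}$ and $\sum_{\psi}\psi(\delta)$ vanishes unless $\delta=0$. The strategy is to apply this with $\delta=\Tr_{n/m}(\beta)-a$, after rewriting the proposed sum over $\F_{q^m}$ in terms of this trace.

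First I would observe that for $c\in \F_{q^m}$ and $\beta\in \F_{q^n}$ one has $\chi_c(\beta)=\chi_c(\Tr_{n/m}(\beta))$. Indeed, writing $\chi$ for the canonical additive character of $\F_{q^n}$ and $\chi^{(m)}$ for that of $\F_{q^m}$, the transitivity of the trace gives $\mathcal T_n(c\beta)=\mathcal T_m(\Tr_{n/m}(c\beta))=\mathcal T_m(c\cdot \Tr_{n/m}(\beta))$, the last step using $c\in \F_{q^m}$; hence $\chi_c(\beta)=e^{2\pi i\mathcal T_n(c\beta)/p}=\chi^{(m)}_c(\Tr_{n/m}(\beta))$. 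Applying this to both $\beta$ and $\gamma$, and using $\Tr_{n/m}(\gamma)=a$, the middle expression becomes
$$\frac{1}{q^m}\sum_{c\in \F_{q^m}}\chi^{(m)}_c\bigl(\Tr_{n/m}(\beta)-a\bigr),$$
and the second equality in the statement is just the bilinearity $\chi_c(\beta-\gamma)=\chi_c(\beta)\chi_c(\gamma)^{-1}$, which is immediate since each $\chi_c$ is a group homomorphism from the additive group to the unit circle.

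Then I would invoke the orthogonality identity above with $\delta=\Tr_{n/m}(\beta)-a\in \F_{q^m}$: the sum equals $1$ precisely when $\Tr_{n/m}(\beta)=a$ and $0$ otherwise, which is exactly $I_{n,d,a}(\beta)$ (here $d=m$ in the notation of the lemma). This completes the argument. There is no real obstacle; the only point requiring any care is the compatibility of additive characters under the trace, i.e.\ the identity $\chi_c(\beta)=\chi^{(m)}_c(\Tr_{n/m}(\beta))$ for $c\in \F_{q^m}$, and one should be mildly careful that the hypothesis only supplies \emph{some} element $\gamma$ with $\Tr_{n/m}(\gamma)=a$ — which is all that is needed, since the right-hand side visibly does not depend on the particular choice of such $\gamma$.
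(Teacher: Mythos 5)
Your proof is correct and is essentially the standard argument: the paper itself gives no proof of this lemma, deferring to Subsection 2.3.1 of the reference [KR], where the same orthogonality-of-additive-characters computation (via the compatibility $\chi_c(\beta)=\chi^{(m)}_c(\Tr_{n/m}(\beta))$ for $c\in\F_{q^m}$) is carried out. Your remarks on the identification $d=m$ and on the independence of the choice of $\gamma$ are both accurate.
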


For the proof of the previous lemma, see Subsection 2.3.1 of~\cite{KR}.

\subsection{Auxiliary lemmas}

From Corollary 1.2 in~\cite{LR19}, we have the following result.

\begin{lem}\label{lem:zero-sum}
Let $n>1$ be an integer that is not a prime power, $1<k<\sigma_0(n)$, and let $\mathbf{d} =(d_1, \ldots, d_k)\in \Lambda_k(n)$. Then the number of $k$-tuples $(x_1, \ldots, x_k)\in \F(\mathbf{d})$ such that $x_1+\cdots+x_k=0$
equals $$q^{d_1+\cdots+d_k-\lambda(\mathbf{d})}.$$
\end{lem}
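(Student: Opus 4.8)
The plan is to prove Lemma~\ref{lem:zero-sum} by translating the counting problem into a statement about the kernel of an $\F_q$-linear map and then computing the dimension of that kernel via a polynomial identity. First I would observe that each $\F_{q^{d_i}}$ is, as an $\F_q$-vector space, naturally identified with $\F_q[x]/(x^{d_i}-1)$ through the (normal-basis-flavoured) correspondence $\beta\mapsto \sum_{j=0}^{d_i-1}\beta^{q^j}x^j$ modulo $x^{d_i}-1$; more to the point, the condition $x_1+\cdots+x_k=0$ with $x_i\in\F_{q^{d_i}}$ should be recast using the structure of $\F_{q^n}$ as a module over $\F_q[x]$, where $x$ acts as the $q$-Frobenius. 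Concretely, $\F_{q^n}\cong \F_q[x]/(x^n-1)$ as $\F_q[x]$-modules, and the subset of $\F_{q^n}$ consisting of elements whose trace-compatibility data over the $\F_{q^{d_i}}$ matches is governed by the ideals $(x^{d_i}-1)$.

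Second I would make precise the interpretation of Lemma~\ref{lem:zero-sum} in these terms: the $k$-tuples $(x_1,\dots,x_k)\in\F(\mathbf d)$ with $x_1+\cdots+x_k=0$ are counted by a fibre of the sum map $\Sigma:\F(\mathbf d)\to \F_{q^{e}}$ where we embed everything into a common overfield indexed by $e=\lcm(d_1,\dots,d_k)$ (or rather we work inside $\F_{q^n}$). The key point is that $\dim_{\F_q}\F(\mathbf d)=d_1+\cdots+d_k$, while the image of $\Sigma$ has $\F_q$-dimension exactly $\lambda(\mathbf d)=\deg\lcm(x^{d_1}-1,\dots,x^{d_k}-1)$, so that the kernel (the set we want to count) has dimension $d_1+\cdots+d_k-\lambda(\mathbf d)$, giving the stated count $q^{d_1+\cdots+d_k-\lambda(\mathbf d)}$. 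The identity $\deg\lcm(x^{d_1}-1,\dots,x^{d_k}-1)=\sum d_i + \sum_{i\ge 2}(-1)^{i+1}\sum_{\ell_1<\cdots<\ell_i}\gcd(d_{\ell_1},\dots,d_{\ell_i})$ is exactly the inclusion–exclusion expansion of $\dim$ applied to the lattice of ideals $(x^{d_i}-1)$, using that $\gcd(x^a-1,x^b-1)=x^{\gcd(a,b)}-1$ and $\deg\gcd=\gcd(a,b)$; this can be cited or derived from the Chinese Remainder Theorem as in Remark~\ref{rem:reis}.

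The cleanest route, however, is probably to avoid re-deriving anything and instead reduce directly to the cited statement. Since Corollary~1.2 of~\cite{LR19} is quoted as the source, I would phrase the proof as: apply that corollary with the relevant choice of moduli $x^{d_i}-1$, noting that the hypotheses $n$ not a prime power and $1<k<\sigma_0(n)$ together with $\mathbf d\in\Lambda_k(n)$ guarantee the non-degeneracy needed (no $d_i$ divides another, so no $x^{d_i}-1$ divides another $x^{d_j}-1$), and read off the count. I would then add one or two sentences explaining why the two formulations match: the surjection from $\bigoplus_i \F_q[x]/(x^{d_i}-1)$ onto $\F_q[x]/(\lcm_i(x^{d_i}-1))$ given by $(f_i)_i\mapsto \sum_i f_i$ has kernel of dimension $\sum d_i - \lambda(\mathbf d)$, and a nonzero-sum analysis or a direct bijection with $\F_{q^n}$-side objects identifies this kernel with the zero-sum $k$-tuples.

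The main obstacle I anticipate is getting the identification between ``$k$-tuples in $\prod\F_{q^{d_i}}$ summing to zero'' and ``elements of the kernel of the sum-of-ideals map in $\F_q[x]$'' exactly right, including the choice of common overfield in which the sum is taken — one must be careful that adding $x_i\in\F_{q^{d_i}}$ and $x_j\in\F_{q^{d_j}}$ means adding inside $\F_{q^{\lcm(d_i,d_j)}}\subseteq\F_{q^n}$, and that the resulting condition is $\F_q$-linear with the claimed image dimension. Once that dictionary is set up, the dimension count and the inclusion–exclusion formula for $\lambda(\mathbf d)$ are routine, so the bulk of the write-up is bookkeeping rather than genuine difficulty; for that reason I expect the proof to be short, essentially a pointer to~\cite{LR19} plus the translation lemma.
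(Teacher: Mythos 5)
Your proposal matches the paper exactly: Lemma~\ref{lem:zero-sum} is stated there without proof, as a direct quotation of Corollary~1.2 of~\cite{LR19}, which is precisely the ``cleanest route'' you settle on. Your supplementary sketch --- view $\F_{q^n}$ as $\F_q[x]/(x^n-1)$ with $x$ acting as the $q$-Frobenius, observe that the image of the sum map $\Sigma$ is the $\F_q$-subspace $\F_{q^{d_1}}+\cdots+\F_{q^{d_k}}$, whose dimension is $\deg\lcm(x^{d_1}-1,\ldots,x^{d_k}-1)=\lambda(\mathbf{d})$, and conclude by rank--nullity --- is also sound and is essentially the content of the cited corollary.
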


We further require effective upper bounds on the functions $W$ and $\lambda(\mathbf{d})$. We have the following results.

\begin{lem}\label{lem:tech-1}
\begin{enumerate}[(i)]
\item If $W(t)$ is the number of squarefree divisors of $t$, then for all $t\ge 3$,
$$W(t-1)<t^{\frac{0.96}{\log\log t}}.$$

\item If $n>1$ is an integer that is not a prime power, $1<k<\sigma_0(n)$ and $\mathbf{d}\in \Lambda_k(n)$, then
$$\lambda(\mathbf{d})\le n-\varphi(n),$$
where $\varphi(n)$ is the Euler Totient Function.
\end{enumerate}
\end{lem}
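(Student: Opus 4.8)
The plan is to prove Lemma~\ref{lem:tech-1} in two independent parts, treating (i) as an analytic estimate on the squarefree-divisor-counting function and (ii) as an arithmetic inequality bounding $\lambda(\mathbf{d})$ via inclusion--exclusion.

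\textbf{Part (i).} Write $W(m)=2^{\omega(m)}$, where $\omega(m)$ is the number of distinct prime divisors of $m$. It suffices to produce a sharp enough upper bound on $\omega(m)$ in terms of $\log m$ and $\log\log m$. First I would use the standard fact that, if $p_1<p_2<\cdots$ are the primes in increasing order and $m$ has $r=\omega(m)$ distinct prime factors, then $m\ge p_1p_2\cdots p_r=\#r$ (the primorial), so $r$ is bounded by the largest index with $\#r\le m$. Taking logarithms and invoking Chebyshev-type estimates for $\theta(x)=\sum_{p\le x}\log p$ and the bound $p_r\sim r\log r$ (or an explicit effective version thereof), one gets $\omega(m)\le (1+o(1))\frac{\log m}{\log\log m}$. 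The constant $0.96$ is slack enough that a completely explicit version --- for instance Robin's or Nicolas's effective bounds on $\omega$, or a direct computation using that the product of the first $r$ primes exceeds $e^{0.8\, p_r}$ for $r$ not too small --- yields $2^{\omega(m)}<m^{0.96/\log\log m}$ for all $m\ge 2$ with $\log\log m>0$; the finitely many small cases $t\ge 3$ where $\log\log t$ is tiny or negative must be checked by hand (note $\log\log t>0$ requires $t\ge 16$, so the cases $3\le t\le 15$ are verified directly, observing $W(t-1)$ is small there while $t^{0.96/\log\log t}$, with the convention handled appropriately, dominates). Applying this with $m=t-1$ gives the claim.

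\textbf{Part (ii).} Here I would unwind the definition
$$\lambda(\mathbf{d})=\sum_{i=1}^{k}d_i+\sum_{i=2}^{k}(-1)^{i+1}\sum_{1\le \ell_1<\cdots<\ell_i\le k}\gcd(d_{\ell_1},\ldots,d_{\ell_i}),$$
and recognize it, as noted in Remark~\ref{rem:reis}, as $\deg\bigl(\lcm(x^{d_1}-1,\ldots,x^{d_k}-1)\bigr)$. The key observation is that each $x^{d_i}-1$ divides $x^n-1$, hence $\lcm(x^{d_1}-1,\ldots,x^{d_k}-1)$ divides $x^n-1$; moreover, since every $d_i<n$ is a proper divisor of $n$, the $n$-th cyclotomic polynomial $\Phi_n(x)$ divides none of the $x^{d_i}-1$, so $\Phi_n(x)$ is coprime to the lcm. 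Therefore $\lcm(x^{d_1}-1,\ldots,x^{d_k}-1)$ divides $(x^n-1)/\Phi_n(x)$, whence
$$\lambda(\mathbf{d})=\deg\bigl(\lcm(x^{d_1}-1,\ldots,x^{d_k}-1)\bigr)\le \deg\!\left(\frac{x^n-1}{\Phi_n(x)}\right)=n-\varphi(n),$$
which is exactly the asserted bound.

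I expect Part (ii) to be essentially immediate once the $\lcm$ reformulation from Remark~\ref{rem:reis} is invoked, so the main obstacle is Part (i): one must be careful to make the estimate on $\omega(t-1)$ genuinely effective (not merely asymptotic) and to dispose cleanly of the small values of $t$ where $\log\log t$ is small, zero, or negative, since there the stated inequality's right-hand side behaves badly and the bound has to be interpreted or checked case by case. A secondary technical point is choosing an explicit primorial lower bound sharp enough that the constant $0.96$ --- rather than the classical $\log 2\approx 0.693$ that comes out of the crudest argument --- is actually attained; this is where one leans on a good effective form of the prime number theorem such as the bounds of Rosser--Schoenfeld.
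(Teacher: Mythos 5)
Your part (ii) is correct and is in substance the paper's own argument: the paper bounds $\deg(\lcm(x^{d_1}-1,\ldots,x^{d_k}-1))$ by noting that each $x^{d_i}-1$ divides $\lcm(x^{n/p_1}-1,\ldots,x^{n/p_t}-1)$, whose degree is $n-\varphi(n)$ by inclusion--exclusion; over $\mathbb{Q}$ that polynomial is exactly $(x^n-1)/\Phi_n(x)$, so your cyclotomic phrasing gives the identical bound. (One small point worth making explicit: the lcm in Remark~\ref{rem:reis} lives in $\F_q[x]$, where $\Phi_n$ need not be irreducible and $x^n-1$ need not be separable; your ``does not divide, hence coprime'' step uses irreducibility of $\Phi_n$ over $\mathbb{Q}$. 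This is legitimate only because $\lambda(\mathbf{d})$ is given by the field-independent gcd formula, so the degree of the lcm may be computed over $\mathbb{Q}$ --- you should say so.)

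For part (i) you diverge from the paper, which simply quotes inequality (4.1) of Cohen--Oliveira e Silva--Trudgian and does not reprove it. Your primorial/effective-prime-counting plan is the standard route to such bounds, but as written it is a programme rather than a proof, and your claim that ``the constant $0.96$ is slack enough'' is not accurate: the inequality is nearly sharp at primorial values (e.g.\ $t-1=30030$ gives $W(t-1)=64$ against $t^{0.96/\log\log t}\approx 69.6$), so the explicit Rosser--Schoenfeld-type computation and the finite check genuinely have to be carried out, and the naive asymptotic constant $\log 2$ does not hand you $0.96$ uniformly for all $t\ge 3$ without that work. Also, your concern about $\log\log t$ being zero or negative for $3\le t\le 15$ is misplaced for the statement as given: since $t\ge 3>e$, one always has $\log\log t>0$; the quantity that can be negative is $\log\log(t-1)$, which does not appear in the inequality. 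Neither point is a conceptual error, but if you intend to prove (i) rather than cite it, the effective estimates are where essentially all of the work lies, and your sketch leaves that part open.
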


\begin{proof}
Item (i) is a straightforward consequence of inequality (4.1) in~\cite{cohen-trud}. For item (ii) observe that, as stated in Remark~\ref{rem:reis}, $\lambda(\mathbf{d})$ equals the degree of the least common multiple of the polynomials $x^{d_1}-1, \ldots, x^{d_k}-1$. Since $d_1<\cdots<d_k<n$, if $p_1, \ldots, p_t$ are the distinct prime divisors of $n$, we have that the polynomials  $x^{d_1}-1, \ldots, x^{d_k}-1$ divide the polynomial
$$\lcm(x^{n/p_1}-1, \ldots, x^{n/p_t}-1).$$
An inclusion-exclusion argument shows that the previous polynomial has degree $n-\varphi(n)$ and the result follows.
\end{proof}

\begin{lem}[see Lemma 4.1 of~\cite{KR}]\label{lem:tech-2}
If $a$ is a positive integer and $p_1, \dots, p_j$ are the distinct prime divisors of $t$ such that $p_i \le 2^a$, then $$W(t) \le c_{t,a} t^{1/a}, \quad \quad \text{where} \quad c_{t,a} := \frac{2^j}{(p_1 \dots p_j)^{1/a}}.$$
In particular, 
$$c_{t,4} < 
\begin{cases}
4.9 \quad \text{for $t$ even,} \\
2.9 \quad \text{for $t$ odd,}
\end{cases}
\quad \text{and} \quad \quad  
c_{t,8} < 4514.7.$$
\end{lem}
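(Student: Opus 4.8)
The plan is to reduce the statement to the elementary identity $W(t)=2^{\omega(t)}$, where $\omega(t)$ denotes the number of distinct prime divisors of $t$; this holds because a squarefree divisor of $t$ is exactly a product of a subset of the primes dividing $t$. I would split the primes dividing $t$ into the ``small'' ones $p_1,\dots,p_j$ (those with $p_i\le 2^a$) and the ``large'' ones (those $>2^a$), so that $\omega(t)=j+m$ with $m=\#\{p\mid t:\ p>2^a\}$ and hence $W(t)=2^{j}\cdot 2^{m}$. The factor $2^{j}$ is precisely the numerator of $c_{t,a}$, so the task is to absorb $2^{m}$ into $t^{1/a}$ together with the correction factor $(p_1\cdots p_j)^{-1/a}$.

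For the large primes, the key observation is that $p>2^a$ forces $2<p^{1/a}$, so
$$2^{m}=\prod_{\substack{p\mid t\\ p>2^a}}2\ <\ \prod_{\substack{p\mid t\\ p>2^a}}p^{1/a}=\Bigl(\prod_{\substack{p\mid t\\ p>2^a}}p\Bigr)^{1/a}.$$
The radical $\mathrm{rad}(t)=\prod_{p\mid t}p$ divides $t$ and factors as $(p_1\cdots p_j)\cdot\prod_{p\mid t,\,p>2^a}p$; therefore $\prod_{p\mid t,\,p>2^a}p=\mathrm{rad}(t)/(p_1\cdots p_j)\le t/(p_1\cdots p_j)$. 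Substituting,
$$W(t)=2^{j}\cdot 2^{m}\ \le\ 2^{j}\Bigl(\frac{t}{p_1\cdots p_j}\Bigr)^{1/a}=\frac{2^{j}}{(p_1\cdots p_j)^{1/a}}\,t^{1/a}=c_{t,a}\,t^{1/a},$$
which is the asserted bound (reading $W(t)\le t^{1/a}$ when $j=0$, and covering the cases $t=1$ or $t$ prime directly).

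For the numerical corollaries, I would write $c_{t,a}=\prod_{i=1}^{j}\tfrac{2}{p_i^{1/a}}$; since each $p_i\le 2^a$ in fact satisfies $p_i<2^a$ when $a\ge 2$, every factor $\tfrac{2}{p_i^{1/a}}$ exceeds $1$, so $c_{t,a}$ is maximized by letting $t$ be divisible by \emph{all} primes $\le 2^a$ that its parity permits. For $a=4$ those primes are $2,3,5,7,11,13$: if $2\mid t$ this yields $c_{t,4}\le\frac{2^6}{(2\cdot 3\cdot 5\cdot 7\cdot 11\cdot 13)^{1/4}}=\frac{64}{30030^{1/4}}<4.9$, and if $t$ is odd we omit the prime $2$ and obtain $c_{t,4}\le\frac{2^5}{(3\cdot 5\cdot 7\cdot 11\cdot 13)^{1/4}}=\frac{32}{15015^{1/4}}<2.9$. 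For $a=8$ the same argument over the $54$ primes below $256$ gives $c_{t,8}\le 2^{54}/P^{1/8}$ with $P$ their product, and a direct computation shows this is $<4514.7$.

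The argument is entirely elementary, so there is no genuine obstacle; the points that need a little care are (i) that the exchange $2<p^{1/a}$ for large primes is strict and that it is the radical of $t$ — not merely some multiple of it — that divides $t$, and (ii) that the extremal value of $c_{t,a}$ is attained by including all parity-admissible small primes, which rests on each factor $\tfrac{2}{p^{1/a}}$ being larger than $1$ for $a\ge 2$. The only step that is computational rather than structural is the final numerical verification for $a=8$, which amounts to evaluating the primorial of the first $54$ primes.
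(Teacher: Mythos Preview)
Your argument is correct and is the standard one. Note that the paper itself does not prove this lemma---it is simply quoted from Lemma~4.1 of~\cite{KR}---so there is no in-paper proof to compare against; what you wrote is essentially the argument one finds in that reference. One tiny cosmetic point: the displayed strict inequality $2^{m}<\prod_{p\mid t,\,p>2^{a}}p^{1/a}$ degenerates to an equality when $m=0$ (empty products on both sides), but since the target bound is $W(t)\le c_{t,a}t^{1/a}$ this is harmless, and your chain still yields the claim because $p_1\cdots p_j\mid\mathrm{rad}(t)\mid t$.
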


\section{Proof of the main result}

Let $N(n, \mathbf{d}, \mathbf{a})$ be the number of primitive elements $\alpha\in \F_{q^n}$ such that $\Tr_{n/d_i}(\alpha)=a_i$. In particular, 
$$N(n, \mathbf{d}, \mathbf{a})=\sum_{w\in \F_{q^n}}\Omega_{n}(\omega)\cdot \prod_{i=1}^k I_{n, d_i, a_i}(w).$$
Since the $k$-tuple $(a_1, \ldots, a_k)$ is $\mathbf{d}$-admissible, we have seen that there exists $\beta\in \F_{q^n}$ such that $\Tr_{\frac{n}{t}/d_i}(\beta)=a_i$ for $1\le i\le k$. Write $D=d_1+\cdots+d_k$ and, for a generic $\mathbf{c} = (c_1, \ldots, c_k) \in \F(\mathbf{d})$, write $s(\mathbf{c})=\sum_{i=1}^kc_i$.
From Lemmas~\ref{aux1} and~\ref{aux2}, we have that
\begin{align*}
\frac{q^{D}N(n, \mathbf{d}, \mathbf{a})}{\theta(q)} &= \sum_{w\in \F_{q^n}} \sum_{t|q^n-1}\frac{\mu(t)}{\varphi(t)}\sum_{\eta \in \Gamma(t)}\eta (w)\cdot \prod_{i=1}^k\left(\sum_{c_i\in \F_{q^{d_i}}}\chi_{{c_i}}(w)\cdot \chi_{{c_i}}(\beta)^{-1}\right) \\
&= \sum_{w\in \F_{q^n}}\sum_{\mathbf{c}\in \F(\mathbf{d})}\sum_{t|q^n-1}\frac{\mu(t)}{\varphi(t)}\sum_{\eta \in \Gamma(t)}\eta(w)\cdot \chi_{s(\mathbf{c})}(w)\cdot \chi_{s(\mathbf{c})}(-\beta)
\end{align*}
$$= \sum_{\mathbf{c}\in \F(\mathbf{d})}\sum_{t|q^n-1}\frac{\mu(t)}{\varphi(t)}\sum_{\eta \in \Gamma(t)}\chi_{s(\mathbf{c})}(-\beta)\cdot G_{n}(\eta, \chi_{s(\mathbf{c})}),$$
where $G_n(\eta, \chi_{s(\mathbf{c})})=\sum_{w\in \F_{q^n}}\eta(w)\cdot \chi_{s(\mathbf{c})}(w)$ denotes a Gauss sum. 
We use the orthogonality relations to obtain
$$G_{n}(\eta, \chi_{s(\mathbf{c})})=
\begin{cases}
q^n & \text{if}\;\; \eta \in \Gamma(1)\;\;\text{and}\;\; s(\mathbf{c})= 0,\\
0 & \text{if}\;\; \eta \in \Gamma(1)\;\;\text{and}\;\; s(\mathbf{c})\ne 0,\\  
0 & \text{if}\;\; \eta\not\in\Gamma(1) \;\;\text{and}\;\;s(\mathbf{c})= 0,
\end{cases}$$
and in the remaining cases we have the well-known identity $|G_{n}(\eta, \chi_{s(\mathbf{c})})|=q^{n/2}$. In addition, if $s(\mathbf{c})=0$, then $\chi_{s(\mathbf{c})}(-\beta)=\chi_{0}(-\beta)=1$. In particular, we may rewrite

$$\frac{q^{D}N(n, \mathbf{d}, \mathbf{a})}{{\theta(q)}}=\sum_{\mathbf{c}\in \F(\mathbf{d})\atop s(\mathbf{c})=0}q^n+\underbrace{\sum_{\mathbf{c}\in \F(\mathbf{d})\atop s(\mathbf{c})\ne 0}\sum_{t|q^n-1\atop t\ne 1}\frac{\mu(t)}{\varphi(t)}\sum_{\eta \in \Gamma(t)}\chi_{s(\mathbf{c})}(-\beta)\cdot G_{n}(\eta, \chi_{s(\mathbf{c})})}_{S}.$$

From Lemma~\ref{lem:zero-sum}, we obtain the following equality
\begin{equation}
\label{eq:aux-eq}\frac{q^{D}N(n, \mathbf{d}, \mathbf{a})}{{\theta(q)}}=q^{n+D-\lambda(\mathbf{d})}+S.
\end{equation}

We observe that $|\chi_{s(\mathbf{c})}({-\beta})|=1$ and $|G_n(\eta, \chi_{s(\mathbf{c})})|=q^{n/2}$ in every term of the sum $S$. Recall that there exist exactly $\varphi(t)$ elements in $\Gamma(t)$ and the function $\mu$ has absolute value $1$ at squarefree integers, and vanishes everywhere else. In particular, we obtain the following inequality 
$$|S|<\sum_{\mathbf{c}\in \F(\mathbf{d})\atop s(\mathbf{c})\ne 0}\sum_{t|q^n-1\atop t\; \text{squarefree}}q^{n/2}=q^{n/2+D}\cdot W(q^n-1).$$
Hence,
$$\frac{q^{D}N(n, \mathbf{d}, \mathbf{a})}{{\theta(q)}}>q^{n+D-\lambda(\mathbf{d})}-q^{n/2+D}\cdot W(q^n-1)\ge 0,$$
provided that $q^{n/2-\lambda(\mathbf{d})}\ge W(q^n-1)$.

\subsection{Proof of Theorem~\ref{thm:main-2}}

Set $\lcm(d_1, \ldots, d_k)=\frac{n}{t}$, where $t>1$ is a divisor of $n$. Since there do not exist $1\le i, j \le k$ such that $d_i$ divides $d_j$, we have that $d_i<n/t$ for any $1\le i\le k$ and then, from Lemma~\ref{lem:tech-1}, we have that $\lambda(\mathbf{d})\le n/t-\varphi(n/t)$. From hypothesis the $k$-tuple $(a_1, \ldots, a_k)$ is $\mathbf{d}$-admissible and then, as stated in Remark~\ref{rem:reis}, Theorem 4.1 of~\cite{LR19} implies that there exist $q^{n/t-\lambda(\mathbf{d})}\ge q^{\varphi(n/t)}>1$ elements $\theta\in \F_{q^{n/t}}$ such that $\Tr_{\frac{n}{t}/d_i}(\theta)=a_i$ for $1\le i\le k$. In particular, there exists $\theta_0\in \F_{q^t}$ with such a property in a way that $\theta_0\ne 0$.  Since $\theta_0\ne 0$, Theorem~\ref{thm:cohen} implies that there exists a primitive element $\alpha\in \F_{q^n}$ such that $\Tr_{n/\frac{n}{t}}(\alpha)=\theta_0$ and then, by the transitivity of the trace, we have that
$$\Tr_{n/d_i}(\alpha)=\Tr_{\frac{n}{t}/d_i}(\theta_0)=a_i, \; 1\le i\le k.$$

\section{Theorem \ref{thm:main} under the condition $\gcd(d_i, d_j)=1$}\label{sec:concasymp}

In this section, we discuss the existence of primitive elements of $\F_{q^n}$ with arbitrary prescribed traces over extensions $\F_{q^{d_i}}$ under the following condition:
\begin{center}
\fbox{$\gcd(d_i, d_j)=1\;\,\text{for}\;\,1\le i<j\le k$}.
\end{center}
We observe that the former condition is not restrictive when $k=2$. In fact, if $d_1<d_2$ are divisors of $n$ and $d=\gcd(d_1, d_2)$, we have that $\F_{q^{d_i}}=\F_{Q^{e_i}}$ where $e_i=d_i/d$ satisfy $\gcd(e_1, e_2)=1$. We obtain asymptotic and concrete results, that are displayed in the following theorem.

\begin{thm}\label{thm:cop}
Let $n>1$ be an integer that is not a prime power, $1 < k < \sigma_0(n)$ and $\mathbf{d} = (d_1,\dots,d_k) \in \Lambda_k(n)$ be such that $\gcd(d_i, d_j)=1$ for every $1\le i<j\le k$. Furthermore, let $\mathbf{a} = (a_1,\dots,a_k) \in \F(\mathbf{d})$ be a $\mathbf{d}$-admissible $k$-tuple. 
Then there exists a primitive element $\alpha \in \fqn$ with prescribed traces $\Tr_{n/d_i}(\alpha) = a_i$ provided that one of the following holds:
\begin{enumerate}[(a)]
\item $k\ge 3$;
\item $k=2$ and 
\begin{enumerate}[{(b.}1)]
\item $d_1\ge 5$ and $q\ge 5$ if $(d_1,d_2)=(5,6)$; 
\item $d_1 = 4$ and either $d_2 \ge 11$, or $d_2 \ge 9$ and $q\ge 3$, or $d_2=5, 7$ and $q\ge e^{e^{6.7}}$;
\item $d_1 = 3$ and either $d_2 \ge 38$, or $d_2 \ge 5$ and $q\ge e^{e^{26.1}}$.
\end{enumerate}
\end{enumerate}
\end{thm}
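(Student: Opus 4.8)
The plan is to derive Theorem~\ref{thm:cop} as an explicit instance of Theorem~\ref{thm:main} (with Theorem~\ref{thm:main-2} handling the degenerate configurations), by showing that under the coprimality hypothesis $\gcd(d_i,d_j)=1$ the quantity $\lambda(\mathbf d)$ simplifies dramatically and the inequality \eqref{ineqprincipal} becomes tractable. First I would compute $\lambda(\mathbf d)$ explicitly in the coprime case: since $\gcd(d_{\ell_1},\dots,d_{\ell_i})=1$ for every $i\ge 2$, the inclusion-exclusion sum in Definition~\ref{lambdas}(ii) collapses and one gets $\lambda(\mathbf d)=d_1+\cdots+d_k-\binom{k-1}{1}+\binom{k-1}{2}-\cdots=d_1+\cdots+d_k-1$ for $k\ge 2$ (using $\sum_{i=2}^k(-1)^{i+1}\binom{k}{i}=k-1$). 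Hence the key exponent in \eqref{ineqprincipal} is $n/2-\lambda(\mathbf d)=n/2-(d_1+\cdots+d_k)+1$.

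Next I would bound $n$ from below in terms of the $d_i$. Because the $d_i$ are pairwise coprime divisors of $n$, their product $d_1\cdots d_k$ divides $n$; in particular $n\ge d_1\cdots d_k$, and in fact $n$ is a proper multiple bounded below by $\lcm$ of the $d_i$ times the smallest prime not already used, but $n\ge d_1\cdots d_k$ already suffices in most ranges. So it is enough to verify
$$q^{\,d_1\cdots d_k/2-(d_1+\cdots+d_k)+1}\ge W(q^n-1),$$
and then bound the right side via Lemma~\ref{lem:tech-1}(i), $W(q^n-1)<(q^n)^{0.96/\log\log(q^n)}$, or via the cruder $W(q^n-1)\le c_{t,a}(q^n-1)^{1/a}$ of Lemma~\ref{lem:tech-2} with $a=4$ or $a=8$. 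For $k\ge 3$ the product $d_1\cdots d_k$ grows fast enough (the smallest case being $d_i\in\{2,3,5\}$, product $30$) that $d_1\cdots d_k/2-(d_1+\cdots+d_k)+1$ is comfortably positive and large, and a routine estimate closes part~(a) for all $q$. For $k=2$ one has the exponent $d_1d_2/2-d_1-d_2+1=(d_1-2)(d_2-2)/2-1$, which is the crucial quantity; I would split into cases according to $d_1\in\{2,3,4,5,\dots\}$. When $d_1\ge 5$ (so $d_2\ge 6$, and $(d_1,d_2)\neq$ small exceptions) the exponent is already $\ge 4$ and Lemma~\ref{lem:tech-1}(i) finishes it, except that the single borderline pair $(5,6)$ needs the extra hypothesis $q\ge 5$; when $d_1=4$ the exponent is $d_2-3$, so $d_2\ge 11$ gives exponent $\ge 8$ and Lemma~\ref{lem:tech-2} with $a=8$ works for all $q$, $d_2=9,10$ needs $q\ge 3$, and $d_2=5,7$ forces exponent $\le 4$ so one must invoke the sharper asymptotic bound of Lemma~\ref{lem:tech-1}(i), which requires $q$ astronomically large; the case $d_1=3$ (exponent $(d_2-4)/2$, half-integer subtleties since then $d_2$ may be even) is handled the same way.

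The main obstacle is the small-$d_1$ regime for $k=2$, i.e. $d_1\in\{3,4\}$ with $d_2$ small, where the exponent $n/2-\lambda(\mathbf d)$ is only a small positive number and the trivial divisor bounds of Lemma~\ref{lem:tech-2} are hopeless; there one is forced to use Lemma~\ref{lem:tech-1}(i) and solve $q^{n/2-\lambda(\mathbf d)}\ge (q^n)^{0.96/\log\log(q^n)}$ for $q$, which after taking logarithms reduces to $\log\log(q^n)\ge 0.96 n/(n/2-\lambda(\mathbf d))$, forcing the double-exponential thresholds $q\ge e^{e^{6.7}}$ and $q\ge e^{e^{26.1}}$ recorded in the statement; extracting clean, correct constants here — and correctly ruling out the finitely many residual configurations with $\lcm(d_1,d_2)=n$, which instead fall under Theorem~\ref{thm:main-2} once one checks $\lcm(d_1,\dots,d_k)<n$ holds automatically when $n>d_1\cdots d_k$ — is the delicate bookkeeping. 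I also need to address the pair $(d_1,d_2)=(2,d_2)$, which is excluded from the listed cases because then either $\lcm(2,d_2)=d_2<n$ and Theorem~\ref{thm:main-2} applies directly, or $d_2$ odd with $2d_2=n$ and a separate small check is needed; I would fold this into the $\lcm<n$ discussion at the start.
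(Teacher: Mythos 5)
Your overall strategy---reduce to Theorem~\ref{thm:main} after simplifying $\lambda(\mathbf d)$ under coprimality, dispose of the configurations with $\lcm(d_1,\dots,d_k)<n$ via Theorem~\ref{thm:main-2} (after which $d_1\cdots d_k=n$ exactly, not merely $n\ge d_1\cdots d_k$), and then bound $W(q^n-1)$ by Lemma~\ref{lem:tech-1}(i) or Lemma~\ref{lem:tech-2} in a case analysis on $k$ and $d_1$---is exactly the paper's. However, your computation of $\lambda(\mathbf d)$ is wrong for $k\ge 3$: every $\gcd$ with $i\ge 2$ arguments equals $1$ and there are $\binom{k}{i}$ such terms, and $\sum_{i=2}^k(-1)^{i+1}\binom{k}{i}=1-k$ (not $k-1$ as you assert, and not $-1$ as your other displayed expression with the $\binom{k-1}{j}$'s gives), so $\lambda(\mathbf d)=d_1+\cdots+d_k-k+1$ rather than $d_1+\cdots+d_k-1$. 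The two formulas coincide at $k=2$, so your analysis of case (b) is unaffected, and the error is in the safe direction (you would be checking a stronger inequality), but as written the computation is both internally inconsistent and incorrect.

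More substantively, the claim that ``a routine estimate closes part~(a) for all $q$'' is false, and this is a genuine gap. Take $k=3$, $\mathbf d=(2,3,5)$, $n=30$, $q=2$: the correct exponent is $n/2-\lambda(\mathbf d)=7$, while Lemma~\ref{lem:tech-2} with $a=8$ would require $7\ge 30/8+\log_2(4514.7)\approx 15.9$, and Lemma~\ref{lem:tech-1}(i) only gives $W(2^{30}-1)<2^{30\cdot 0.96/\log\log(2^{30})}\approx 2^{9.5}>2^{7}$. Neither bound suffices; one must factor $2^{30}-1=3^2\cdot 7\cdot 11\cdot 31\cdot 151\cdot 331$ and verify $W(2^{30}-1)=64\le 2^{7}$ directly. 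The paper's proof does precisely such direct verifications for $n=30$ with $q\le 13$ and $n=42$ with $q\le 4$. Likewise for $k=2$: you correctly locate the source of the double-exponential thresholds (solving $n/2-\lambda(\mathbf d)\ge 0.96\,n/\log\log(q^n)$ for $q$), but the entire numerical content of conditions (b.1)--(b.3) comes from an extensive table of $(d_1,d_2,q)$ ranges with chosen $a\in\{4,8\}$ together with machine verification of inequality \eqref{ineqprincipal} for the residual small triples; deferring this as ``delicate bookkeeping'' leaves unproved exactly the finitely many cases that the stated hypotheses are calibrated to exclude. (A minor slip: for odd $d_2$ one has $\lcm(2,d_2)=2d_2$, not $d_2$, though the $d_1=2$ case lies outside the theorem anyway.)
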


\begin{proof}
We may assume that $\lcm(d_1, \ldots, d_k)=n$ since otherwise the result is directly implied by Theorem~\ref{thm:main-2}. For $k\ge 2$, the condition $\gcd(d_i, d_j)=1$ implies that
$$\lambda(\mathbf{d}) = d_1 + \dots + d_k - k + 1.$$
From Lemma~\ref{lem:tech-2}, Theorem~\ref{thm:main} and the equation above, it suffices to verify that
\begin{equation}\label{enough1}
\frac{n}{2} - \sum_{i=1}^kd_i + k - 1 \ge \frac{n}{a} + \log_q(c_{q^n,a}),
\end{equation}
for some $a \ge 3$. We always take $a=4$ or $a=8$. Since $\lcm(d_1, \ldots, d_k)=n$, $2 \le d_1 < \dots < d_k$ and $\gcd(d_i,d_j)=1$, we have that 
\begin{equation}
\label{eq:prod-n}d_1\dots d_k = n.
\end{equation}
We provide the proof of items (a) and (b) separately.

\subsection{The case $k\ge 3$}

We split the proof into cases.
\begin{enumerate}[(i)] 
\item $k \ge 4$: Let $2=p_1<p_2<\cdots$ be the increasing sequence of the prime numbers. We have that $p_{\ell}\le d_{\ell}$ and then
 $$p_{\ell} \le d_{\ell} \le \left(\frac{n}{p_1\dots p_{\ell-1}}\right)^{\frac{1}{k+1-\ell}},$$
for $1 \le \ell \le k$, where the empty product equals $1$. Furthermore,
\begin{equation}\label{ineqn}
\frac n2 - \left[ \log_7{\left(\frac{n}{30}\right)} + 2 \right]\sqrt{\frac n2} - \frac n{30} + 3 \ge \frac n4 + 2
\end{equation}
for $n \ge 39$. Since $\left({\frac{n}{p_1\dots p_{\ell-1}}}\right)^{\frac{1}{k+1-\ell}} < \sqrt{\frac n2}$ for every $1 \le \ell \le k-1$ and $n \ge 2 \cdot 3 \cdot 5 \cdot 7^{k-3} \ge 210$, the LHS of inequality \eqref{enough1} is greater than the LHS of inequality \eqref{ineqn}. Taking $a=4$, the RHS of inequality \eqref{ineqn} is greater than the RHS of inequality \eqref{enough1}. This concludes the case $k\ge 4$.

\item $k = 3$: In the same way, we have that the following inequality
\begin{equation}\label{ineq3}
\frac n2 - \sqrt[3]{n} - \sqrt{\frac{n}{2}} - \frac n{6} + 2 \ge \frac n8 + 12.2
\end{equation}
holds true for $n \ge 107$. Since the LHS of inequality \eqref{enough1} is greater than the LHS of inequality \eqref{ineq3}, and the RHS of inequality \eqref{ineq3} is greater than the RHS of inequality \eqref{enough1} with $a=8$, we are done unless $n \in \{30=2\cdot3\cdot5, \; 42=2\cdot3\cdot7, \; 60=3\cdot4\cdot5, \; 66=2\cdot3\cdot11, \; 70=2\cdot5\cdot7, \; 78=2\cdot3\cdot13, \; 84=3\cdot4\cdot7, \; 90=2\cdot5\cdot9, \; 102=2\cdot3\cdot17, \; 105=3\cdot5\cdot7\}$, which are the numbers smaller than $107$ that split into at least three non-trivial relatively prime factors. We now consider the cases:
\begin{enumerate}[{(ii.}1)]
\item $(d_1,d_2) = (3,5)$: If $n \ge 60$ then $\frac n2 - 3 - 5 - \frac n{15} + 2 \ge \frac n4 + 2$, and we argue as above with $a=4$.
\item $(d_1,d_2) = (3,4)$: If $n \ge 42$ then $\frac n2 - 3 - 4 - \frac n{12} + 2 \ge \frac n4 + 2$, and we argue as above with $a=4$.
\item $(d_1,d_2) = (2,5)$: If $n \ge 47$ then $\frac n2 - 2 - 5 - \frac n{10} + 2 \ge \frac n4 + 2$, and we argue as above with $a=4$.
\item $(d_1,d_2) = (2,3)$: If $n \ge 60$ then $\frac n2 - 2 - 3 - \frac n6 + 2 \ge \frac n4 + 2$, and we argue as above with $a=4$. Hence, there only remains $n \in \{30,42\}$.

For $n=30$, the inequality $$\frac{30}2 - 2 - 3 - 5 + 2 \ge \frac{30}8 + \log_q(4514.7)$$ holds true if $q \ge 14$ and we argue as above with $a = 8$. Therefore there only remains the cases $q \in \{2,3,4,5,7,8,9,11,13\}$, for which the inequality $q^{n/2 - \lambda(\mathbf{d})} \ge W(q^n-1)$ can be directly verified.

For $n=42$, the inequality $$\frac{42}2 - 2 - 3 - 7 + 2 \ge \frac{42}8 + \log_q(4514.7)$$ holds true  if $q \ge 5$ and we argue as above with $a = 8$. Therefore it only remains the cases $q \in \{2,3,4\}$, for which the inequality $q^{n/2 - \lambda(\mathbf{d})} \ge W(q^n-1)$ can be directly verified.
\end{enumerate}
\end{enumerate}

\subsection{The case $k=2$}

As in previous cases, it suffices to prove that 
\begin{equation}\label{ineq2}
\frac n2 - d_1 - d_2 + 1 \ge \frac na + \log_q(c_{q^n,a}),
\end{equation}
for $a \in \{4,8\}$. Notice that if $d_1 \ge 8$ then $d_2 \ge 9$ and $(d_1 - 4)(d_2 - 4) \ge 20 \ge 12 + 4 \log_q(c_{q^n,4})$. Therefore, inequality \eqref{ineq2} holds true with $a=4$. Table~\ref{tab:1} provides the ranges of $q, d_1, d_2$ where inequality \eqref{ineq2} holds and the value of $a$ that is used.
{\footnotesize
\begin{center}
\begin{table}[h]
\begin{multicols}{2}
\qquad
\begin{tabular}{|c|c|c|c|}
\hline
$d_1$ & $d_2$ & $q$ & $a$ \\
\hline
\hline
$7$ & $\ge 11$ & $\forall \; q$ & $4$ \\
\hline
$7$ & $10$ & $\ge 5$ & $4$ \\
\hline
$7$ & $9$ & $\ge 8$ & $4$ \\
\hline
$7$ & $8$ & $\ge 37$ & $8$ \\
\hline
$6$ & $\ge 15$ & $\forall \; q$ & $4$ \\
\hline
$6$ & $13$ & $\ge 3$ & $4$ \\
\hline
$6$ & $11$ & $\ge 3$ & $8$ \\
\hline
$6$ & $7$ & $\ge 11$ & $8$ \\
\hline
$5$ & $\ge 19$ & $\forall \; q$ & $8$ \\
\hline
$5$ & $\ge 14$ & $\ge 3$ & $8$ \\
\hline
$5$ & $\ge 12$ & $\ge 4$ & $8$ \\
\hline
$5$ & $11$ & $\ge 5$ & $8$ \\
\hline
%
$5$ & $9$ & $\ge 9$ & $8$ \\
\hline
$5$ & $8$ & $\ge 17$ & $8$ \\
\hline
$5$ & $7$ & $\ge 53$ & $8$ \\
\hline
$5$ & $6$ & $\ge 839$ & $8$ \\
\hline
$4$ & $\ge 31$ & $\forall q$ & $8$ \\
\hline
$4$ & $\ge 23$ & $\ge 3$ & $8$ \\
\hline
\end{tabular}

\begin{tabular}{|c|c|c|c|}
\hline
$d_1$ & $d_2$ & $q$ & $a$ \\
\hline
\hline
$4$ & $\ge 19$ & $\ge 4$ & $8$ \\
\hline
$4$ & $17$ & $\ge 5$ & $8$ \\
\hline
$4$ & $15$ & $\ge 7$ & $8$ \\
\hline
$4$ & $13$ & $\ge 13$ & $8$ \\
\hline
$4$ & $11$ & $\ge 29$ & $8$ \\
\hline
$4$ & $9$ & $\ge 274$ & $8$ \\
\hline
%
$4$ & $7$ & $\ge 2.039 \cdot 10^7$ & $8$ \\
\hline
$3$ & $\ge 114$ & $\forall q$ & $8$ \\
\hline
$3$ & $\ge 78$ & $\ge 3$ & $8$ \\
\hline
$3$ & $\ge 65$ & $\ge 4$ & $8$ \\
\hline
$3$ & $\ge 58$ & $\ge 5$ & $8$ \\
\hline
$3$ & $\ge 52$ & $\ge 7$ & $8$ \\
\hline
$3$ & $\ge 49$ & $\ge 8$ & $8$ \\
\hline
$3$ & $47$ & $\ge 9$ & $8$ \\
\hline
$3$ & $46$ & $\ge 11$ & $8$ \\
\hline
$3$ & $\ge 43$ & $\ge 13$ & $8$ \\
\hline
$3$ & $\ge 40$ & $\ge 17$ & $8$ \\ 
\hline
$3$ & $38$ & $\ge 23$ & $8$ \\ 
\hline
\end{tabular}
\end{multicols}
\caption{Ranges of $q, d_1$ and $d_2$ where inequality \eqref{ineq2} holds true and the value of $a$ that is used.}\label{tab:1}
\end{table}
\end{center}}

Table~\ref{tab:1} compiles exceptions $(q, d_1, d_2)$ for inequality \eqref{ineq2}, assuming that either $d_1\ge 5$, or $d_1=4$ and $d_2\ge 7$, or $d_1=3$ and $d_2\ge 38$. With respect to these ranges, the exceptional triples have reasonably small parameters. Using the software SageMath we verify that such triples $(q, d_1, d_2)$ satisfy inequality \eqref{ineqprincipal}, with the exception of $(q, d_1,d_2) = (q, 5,6)$ with $q < 5$ and $(q, d_1, d_2) = (2, 4, 9)$.

For $(q,d_1,d_2) = (q,3,4)$, inequality \eqref{ineqprincipal} does not hold for any prime power $q$. For the remaining cases, that is, $(q,d_1,d_2) = (q,4,5)$ and $(q,d_1,d_2) = (q,3,d_2)$ with $5 \le d_2 \le 37$, Lemma \ref{lem:tech-1}(i) (or inequality \eqref{ineq2} with $a=8$ provided $d_1=3$ and $17 \le d_2 \le 37$) ensures that there exists a computable constant $q_0$ depending only on $d_1$ and $d_2$ such that inequality \eqref{ineqprincipal} holds for every $q \ge q_0$.

\end{proof}

\begin{rem}\label{rmk:k=d1=2}
For $k = 2$, $d_1 = 2$ and $n = \lcm(2,d_2)$, we have that $q^{n/2 - \lambda(\mathbf{d})} < 1 < W(q^n - 1)$, and so Theorem~\ref{thm:main} is inconclusive. Moreover, in this setting we can actually provide genuine exceptions. In fact, let $n=2\cdot N$, $N>1$ odd and choose $b\in \F_{q^2}$ in  way that $\Tr_{N/1}(b)=0$. In particular, the pair $(b, 0)\in \F_{q^2}\times \F_{q^N}$ is $(2, N)$-admissible. However, there is no primitive element $\alpha\in \F_{q^n}$ with zero trace over $\F_{q^N}$. In fact, any element $\alpha\in \F_{q^n}$ with zero trace over $\F_{q^N}$ satisfies $\alpha^{q^N}=-\alpha$ and, consequently, $\alpha^{2(q^N-1)}=1$. But such an $\alpha$ cannot be primitive since $2(q^N-1)<q^{n}-1$ for every $q\ge 2$.
\end{rem}

\section{Conclusion}
In this paper we have discussed the existence of primitive elements of finite fields with prescribed traces in intermediate extensions. Our main result provides a sufficient condition for the existence of such elements. This condition is encoded in an inequality, which is further explored in order to obtain concrete results on the existence of these elements; this is presented in Theorem~\ref{thm:cop}. 

It would be desirable to explore the validity of Theorem~\ref{thm:cop} without the restriction $\gcd(d_i, d_j)=1$ or at least complete this theorem, exploring the remaining cases under this restriction. For instance, by using a sieving method that is traditional in this kind of problem (see~\cite{cohen, cohen-trud}), one can remove the restrictions $q\ge 5$ and $q \ge 3$ in items (b.1) and  (b.2) of Theorem~\ref{thm:cop}. Within the approach of this paper, we believe that any such improvement would have to go through sharper estimates on the character sums appearing in the proof of Theorem~\ref{thm:main}.

\section*{Acknowledgement}

We thank the anonymous referee for the suggestions that substantially improved the presentation of this work.


\end{document}